\theoremstyle{plain}
\newtheorem{theorem}{Theorem}[section]
\newtheorem*{theorem*}{Theorem}
\newtheorem{proposition}[theorem]{Proposition}
\newtheorem{corollary}[theorem]{Corollary}
\newtheorem{lemma}[theorem]{Lemma}
\theoremstyle{definition}
\newtheorem{definition}[theorem]{Definition}
\newtheorem{remark}[theorem]{Remark}
\newcommand{\enm}[1]{\ensuremath{#1}}          %
\newcommand{\op}[1]{\operatorname{#1}}
\newcommand{\cal}[1]{\mathcal{#1}}
\newcommand{\CC}{\enm{\mathbb{C}}}
\newcommand{\PP}{\enm{\mathbb{P}}}
\newcommand{\Ee}{\enm{\cal{E}}}
\newcommand{\Ff}{\enm{\cal{F}}}
\newcommand{\Ii}{\enm{\cal{I}}}
\newcommand{\Ll}{\enm{\cal{L}}}
\newcommand{\Mm}{\enm{\mathfrak{M}}}
\newcommand{\Oo}{\enm{\cal{O}}}
\renewcommand{\phi}{\varphi}
\renewcommand{\theta}{\vartheta}
\renewcommand{\epsilon}{\varepsilon}
\newcommand{\Ext}{\op{Ext}}
\newcommand{\old}[1]{}
\begin{document}

\title[Cubic Symmetroids and vector bundles on a quadric surface]{Cubic Symmetroids\\and vector bundles on a quadric surface}

\author{Sukmoon Huh}
\address{Department of Mathematics, Sungkyunkwan University \\
Cheoncheon-dong, Jangan-gu \\
Suwon 440-746, Korea}
\email{sukmoonh@skku.edu}
\keywords{Jumping conics, stable bundle, quadric surface, determinantal variety}
\thanks{The author is supported by Basic Science Research Program 2012-0002904 through NRF funded by MEST}
\subjclass[msc2000]{Primary: {14D20}; Secondary: {14E05}}

\maketitle

\begin{abstract}
We investigate the jumping conics of stable vector bundles $\Ee$ of rank 2 on a smooth quadric surface $Q$ with the Chern classes $c_1=\Oo_Q(-1,-1)$ and $c_2=4$ with respect to the ample line bundle $\Oo_Q(1,1)$. We describe the set of jumping conics of $\Ee$, a cubic symmetroid in $\PP_3$, in terms of the cohomological properties of $\Ee$. As a consequence, we prove that the set of jumping conics, $S(\Ee)$, uniquely determines $\Ee$. Moreover we prove that the moduli space of such vector bundles is rational.  
\end{abstract}

\section{Introduction}
Throughout the article, our base field is $\CC$, the complex numbers.

Let $Q$ be a smooth quadric in $\PP_3=\PP (V)$, where $V$ is a 4-dimensional vector space over complex numbers $\CC$, and $\Mm(k)$ be the moduli space of stable vector bundles of rank 2 on $Q$ with the Chern classes $c_1=\Oo_Q(-1,-1)$ and $c_2=k$ with respect to the ample line bundle $\Ll=\Oo_Q(1,1)$. $\Mm (k)$ forms an open Zariski subset of the projective variety $\overline{\Mm}(k)$ whose points correspond to the semi-stable sheaves on $Q$ with the same numerical invariants. The Zariski tangent space of $\Mm(k)$ at $\Ee$, is naturally isomorphic to $H^1(Q, \mathcal{E}nd (\Ee))$ \cite{LePotier} and so the dimension of $\Mm (k)$ is equal to $h^1(Q, \mathcal{E}nd (\Ee))=4k-5$, since $\Ee$ is simple.

In \cite{Huh}, we define the jumping conics of $\Ee\in \Mm(k)$ as a point in $\PP_3^*$ and prove that the set of jumping conic is a symmetric determinantal hypersurface of degree $k-1$ in $\PP_3^*$. It enables us to consider a morphism
$$S : \Mm(k) \rightarrow |\Oo_{\PP_3^*}(k-1)|\simeq \PP_N.$$
The conjecture is that the general $\Ee\in \Mm(k)$ is uniquely determined by $S(\Ee)$. It is true in the case of $k\leq 3$ \cite{Huh}. 

In this article, we prove that the conjecture is true in the case of $k=4$. For a stable vector bundle $\Ee$ in $\Mm(4)$, $S(\Ee)$ is a cubic symmetroid surface, i.e. a symmetric determinantal cubic hypersurface in $\PP_3^*$. In terms of short exact sequences that $\Ee$ admits, we can obtain the relation between the singularity of $S(\Ee)$ and the dimension of cohomology of the restriction of $\Ee$ to its hyperplane section. It turns out that $S(\Ee)$ has exactly 4 singular points. It eventually enables us to prove the rationality of $\Mm(4)$. 

In the last part of the article, we give a brief description of $S(\Ee)$ for non-general vector bundles of $\Mm (4)$. 

We will denote the dimension of the cohomology $H^i(X, \Ff)$ for a coherent sheaf $\Ff$ on $X$ by $h^i(X,\Ff)$, or simply by $h^i(\Ff)$ if there is no confusion.

\section{Preliminaries}
Let $Q$ be a smooth quadric surface isomorphic to $\PP(V_1) \times \PP (V_2)$ for two 2-dimensional vector spaces $V_1$ and $V_2$. Then it is embedded into $\PP_3\simeq \PP (V)$ by the Segre map, where $V=V_1\otimes V_2$. Let us denote $f^*\Oo_{\PP_1}(a) \otimes g^*\Oo_{\PP_1}(b)$ by $\Oo_Q(a,b)$ and $\Ee \otimes \Oo_Q(a,b)$ by $\Ee(a,b)$ for coherent sheaves $\Ee$ on $Q$, where $f$ and $g$ are the projections from $Q$ to each factors. Then the canonical line bundle $K_Q$ of $Q$ is $\Oo_Q(-2,-2)$.

\begin{lemma}
We have
$$H^i (Q, \Oo_Q(a, a+b))=\left\{
                                           \begin{array}{ll}
                                             0, & \hbox{if $a=-1$;} \\
                                             H^i (\PP_1, \Oo_{\PP_1}(a+b)^{\oplus (a+1)}), & \hbox{if $a\geq 0$}
                                           \end{array}
                                         \right.$$
The other cases can be induced from the above by the Serre duality and symmetry. 
\end{lemma}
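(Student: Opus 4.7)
The plan is to reduce the cohomology of $\Oo_Q(a,a+b)$ on $Q\simeq \PP_1\times \PP_1$ to cohomology of line bundles on each $\PP_1$ factor, and then handle the two stated cases separately.

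First I would apply the Künneth formula (equivalently, the projection formula together with the Leray spectral sequence for $f$, which degenerates because $f$ has fibers $\PP_1$) to obtain
$$H^i(Q,\Oo_Q(a,a+b))=\bigoplus_{p+q=i} H^p(\PP_1,\Oo_{\PP_1}(a))\otimes H^q(\PP_1,\Oo_{\PP_1}(a+b)).$$
This isomorphism is the only nontrivial input; everything else is bookkeeping.

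Next I would split into the two cases. If $a=-1$, then $H^p(\PP_1,\Oo_{\PP_1}(-1))=0$ for every $p$, and the Künneth decomposition forces $H^i(Q,\Oo_Q(-1,b-1))=0$ for all $i$. If instead $a\geq 0$, then $H^0(\PP_1,\Oo_{\PP_1}(a))$ is $(a+1)$-dimensional and $H^1(\PP_1,\Oo_{\PP_1}(a))=0$, so only the $p=0$ term survives and
$$H^i(Q,\Oo_Q(a,a+b))\simeq H^0(\PP_1,\Oo_{\PP_1}(a))\otimes H^i(\PP_1,\Oo_{\PP_1}(a+b))\simeq H^i(\PP_1,\Oo_{\PP_1}(a+b))^{\oplus(a+1)},$$
which is the desired formula.

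For the remaining cases, I would invoke Serre duality on $Q$: since $K_Q=\Oo_Q(-2,-2)$, one has $H^i(Q,\Oo_Q(a,c))\simeq H^{2-i}(Q,\Oo_Q(-2-a,-2-c))^{\vee}$, which covers the values $a\leq -2$ by reducing them to $-2-a\geq 0$. Finally the symmetry $Q\simeq \PP(V_2)\times \PP(V_1)$ swaps the two factors and handles the analogous statement for $b$-indices. There is no real obstacle here; the only thing to be careful about is the degenerate range near $a=-1$, where both cases of the piecewise formula must agree in giving $0$, and this is precisely ensured by the vanishing of $H^*(\PP_1,\Oo_{\PP_1}(-1))$.
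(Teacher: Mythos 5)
Your proposal is correct and follows essentially the same route as the paper, which also reduces to the $\PP_1$ factor via the (degenerate) Leray spectral sequence for the projection — your Künneth decomposition is just the explicit form of that degeneration, and your case analysis for $a=-1$ and $a\geq 0$ plus Serre duality and the factor-swapping symmetry fills in exactly the bookkeeping the paper leaves implicit.
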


\begin{proof}
Note that $Q$ is the projectivization of the vector bundle $\Oo_{\PP_1}(1)^{\oplus 2}$ over $\PP_1$. Then the proof follows from the degeneration of the Leray spectral sequence
$$H^i(\PP_1, R^jf_*\Oo_Q (a,a+b)) \Rightarrow H^{i+j} (Q, \Oo_Q(a,a+b)).$$
\end{proof}

Now let us denote the ample line bundle $\Oo_Q(1,1)$ by $\Ll$. 

\begin{definition}
A torsion free sheaf $\Ee$ of rank $r$ on $Q$ is called \textit{stable} (resp. \textit{semi-stable}) with respect to $\Ll$ if
$$\frac{\chi(\Ff \otimes \Ll^{\otimes m})}{r'} < (resp. \leq ) \frac{\chi(\Ee\otimes \Ll^{\otimes m})}r,$$
for all non-zero subsheaves $\Ff\subset \Ee$ of rank $r'$.
\end{definition}

Let $\overline{\Mm} (k)$ be the moduli space of semi-stable sheaves of rank 2 on $Q$ with the Chern classes $c_1=\Oo_Q(-1,-1)$ and $c_2=k$ with respect to $\Ll=\Oo_Q(1,1)$. The existence and the projectivity of $\overline{\Mm}(k)$ is known in \cite{Gieseker} and it has an open Zariski subset $\Mm(k)$ which consists of the stable vector bundles with the given numeric invariants. By the Bogomolov theorem \cite{LePotier}, $\Mm (k)$ is empty if $4k < c_1^2=2$ and in particular, we can consider only the case of $k\geq 1$. The dimension of $\Mm (k)$ can be computed to be $h^1(Q, \mathcal{E}nd (\Ee))=4k-5$.

Note that $\Ee \simeq \Ee^*(-1,-1)$ and by the Riemann-Roch theorem \cite{Hartshorne}, we have
$$\chi_{\Ee}(m):=\chi(\Ee(m,m))=2m^2+2m+1-k,$$
for $\Ee\in \overline{\Mm}(k)$.

For a hyperplane section $H$ of $\PP_3$, we define $C_H:=Q\cap H$ to be a conic on $H$. 
\begin{definition}
The conic $C_H$ is called to be a {\it jumping conic} if $h^0(\Ee|_{C_H})\geq 1$ and let 
$$S(\Ee):=\{ H \in \PP_3^* ~|~ h^0(\Ee|_{C_H})\geq 1\}.$$
\end{definition}

When $C_H$ is a smooth conic on $H$,  it is a jumping conic if the vector bundle $\Ee$ splits non-generically over it. 

\begin{theorem}\cite{Huh}
For $\Ee\in \Mm(k)$, $S(\Ee)$ is a symmetric determinantal hypersurface of degree $k-1$ in $\PP_3^*$ and it has a singular point at $H\in \PP_3^*$ if and only if $h^0(\Ee|_{C_H})\geq 2$. 
\end{theorem}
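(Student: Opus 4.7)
The plan is to extract $S(\Ee)$ from a family of restriction sequences and realize it as the symmetric degeneracy locus of a matrix of linear forms on $\PP_3^*$.

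First I would fix $H \in \PP_3^*$ with defining section $s \in V^* = H^0(Q, \Oo_Q(1,1))$ and consider
\begin{equation*}
0 \to \Ee(-1,-1) \xrightarrow{\cdot s} \Ee \to \Ee|_{C_H} \to 0.
\end{equation*}
Stability of $\Ee$ and of its twist forces $H^0(\Ee) = H^0(\Ee(-1,-1)) = 0$, while $\Ee \simeq \Ee^*(-1,-1)$ combined with Serre duality gives $H^2(\Ee) = H^2(\Ee(-1,-1)) = 0$. Riemann--Roch then yields $h^1(\Ee) = h^1(\Ee(-1,-1)) = k-1$, and the long exact sequence collapses to
\begin{equation*}
0 \to H^0(\Ee|_{C_H}) \to H^1(\Ee(-1,-1)) \xrightarrow{\mu_s} H^1(\Ee) \to H^1(\Ee|_{C_H}) \to 0.
\end{equation*}
Globalizing by pushing the universal restriction sequence forward along $p : \PP_3^* \times Q \to \PP_3^*$, the family $\{\mu_s\}$ becomes a morphism $\Oo_{\PP_3^*}(-1)^{\oplus (k-1)} \to \Oo_{\PP_3^*}^{\oplus (k-1)}$, i.e.\ a $(k-1) \times (k-1)$ matrix $M$ of linear forms on $\PP_3^*$, whence $S(\Ee) = \{\det M = 0\}$ has degree $k-1$.

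To see that $M$ may be taken symmetric, I would use the antisymmetric wedge $\omega : \Ee \otimes \Ee \to \Oo_Q(-1,-1)$ encoding $\Ee \simeq \Ee^*(-1,-1)$, combined with Serre duality, to identify $H^1(\Ee) \simeq H^1(\Ee(-1,-1))^*$. Under this identification $M(s)$ represents the bilinear form on $H^1(\Ee(-1,-1))$ given by
\begin{equation*}
(a,b) \;\mapsto\; s \cup a \cup b \;\in\; H^2(Q, K_Q) \simeq \CC,
\end{equation*}
with the sheaf contraction proceeding through $\omega$. Swapping $a$ and $b$ produces $(-1)^{1\cdot 1}$ from graded-commutativity of cup product on $H^1 \times H^1$ together with another $-1$ from antisymmetry of $\omega$; these two signs cancel, so the form is symmetric and therefore so is $M(s)$. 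This sign accounting is the step I expect to require the most care.

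Finally, the exact sequence above gives $h^0(\Ee|_{C_H}) = (k-1) - \op{rank} M(s)$, and from the identity $\partial(\det M)/\partial x_i = \op{tr}(\op{adj}(M) \cdot \partial M/\partial x_i)$ a point $H$ is singular on $\{\det M = 0\}$ iff $\op{adj} M(s) = 0$, i.e.\ iff $\op{rank} M(s) \leq k-3$, equivalently $h^0(\Ee|_{C_H}) \geq 2$.
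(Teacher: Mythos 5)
Your outline---the restriction sequence $0\to\Ee(-1,-1)\xrightarrow{\,s\,}\Ee\to\Ee|_{C_H}\to 0$, the vanishings coming from stability and from $\Ee\simeq\Ee^*(-1,-1)$, the two rank-$(k-1)$ direct images giving a $(k-1)\times(k-1)$ matrix $M$ of linear forms on $\PP_3^*$, and the symmetry of $M$ obtained by playing the skew pairing $\omega:\Ee\otimes\Ee\to\Oo_Q(-1,-1)$ against the sign of the cup product on $H^1\times H^1$---is the standard Barth-type argument; note that the present paper does not prove this statement at all but quotes it from \cite{Huh}, and your skeleton is presumably the route taken there. Two steps, however, are genuinely missing. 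First, you never exclude $\det M\equiv 0$: for $S(\Ee)$ to be a hypersurface of degree $k-1$ you must show that a \emph{general} conic $C_H$ is non-jumping, i.e.\ that $\Ee|_{C_H}\simeq\Oo_{C_H}(-p)^{\oplus 2}$ generically. This is a Grauert--M\"ulich-type restriction statement and is precisely where stability is needed beyond the vanishing of $h^0(\Ee)$; without it the determinantal description gives no hypersurface.

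Second, and more seriously, your final equivalence is only half proved. The identity $\partial_i\det M=\op{tr}(\op{adj}(M)\,\partial_i M)$ shows that $\op{adj}M(s)=0$ (equivalently $h^0(\Ee|_{C_H})\geq 2$) forces $H$ to be a singular point, but not the converse: at a corank-one point with kernel spanned by $v$, the differential of $\det M$ is, up to a nonzero scalar, the linear form $s'\mapsto v^{T}M(s')v$, and for an abstract symmetric matrix of linear forms this form can vanish identically. For instance
\begin{equation*}
M=\begin{pmatrix} l_1 & l_2\\ l_2 & 0\end{pmatrix},\qquad \det M=-l_2^{\,2},
\end{equation*}
is singular at every point of $\{l_2=0\}$, including points where the corank is one. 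So ``singular $\Rightarrow h^0\geq 2$'' is not a matrix-algebra fact; you must prove that for $0\neq v\in\ker M(s_0)$, i.e.\ for the corresponding section of $\Ee|_{C_{H_0}}$, the linear form $s'\mapsto\langle \mu_{s'}(v),v\rangle$ (the Serre-duality pairing through $\omega$) is not identically zero---equivalently, that the symmetric determinantal representation is not degenerate in the way the example exhibits (e.g.\ that $\det M$ cannot acquire such a multiple component). That nonvanishing is the actual content of the ``only if'' direction and has to come from the cohomological/geometric structure of the stable bundle $\Ee$; as written, your argument does not supply it.
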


It enables us to consider a morphism 
$$S: \Mm(k) \rightarrow |\Oo_{\PP_3^*} (k-1)|\simeq \PP_N,$$
where $N= {k+2 \choose 3}-1$. 

In \cite{Huh}\cite{Huh2}, the cases of $k=2,3$ are dealt in detail. For example, when $k=2$, the morphism $S$ extends to an isomorphism from $\overline{\Mm}(2) \rightarrow \PP_3$ and $\Mm(2)$ is isomorphic to $\PP_3\backslash Q$.  In particular, $S(\Ee)$ determines uniquely $\Ee\in \Mm(2)$. An analogue of this result turns out to be true in the case of $k=3$.

\section{Results}
From now on, we will investigate $S(\Ee)$ for $\Ee\in \Mm(4)$, which is now a {\it cubic symmetroid surface}, i.e. a symmetric determinantal cubic surface in $\PP_3^*$. Note that a nonsingular cubic surface cannot be symmetrically determinantal \cite{D}.  

Since $\chi_{\Ee}(1)=1$ and $\Ee$ is stable, it admits an exact sequence
\begin{equation}\label{seq1}
0\rightarrow \Oo_Q \rightarrow \Ee(1,1) \rightarrow \Ii_Z(1,1) \rightarrow 0,
\end{equation}
where $Z$ is a zero-dimensional subscheme of $Q$ with length 4 and $\Ii_Z(1,1)$ is the tensor product of the ideal sheaf of $Z$ and $\Oo_Q(1,1)$. Let us assume that $Z$ is in general position, then we can check that $h^0(\Ee(1,1))=1$. In particular, $Z$ is uniquely determined by $\Ee$. Note that 
$$\PP \Ext^1(\Ii_Z(1,1), \Oo_Q)\simeq \PP H^0(\Oo_Z)^* \simeq \PP_3.$$
A general point in this family of extensions corresponds to a stable vector bundle \cite{C} and so $\Mm(4)$ is birational to a $\PP_3$-bundle over the Hilbert scheme $Q^{[4]}$ of zero-dimensional subscheme of $Q$ with length 4. It is consistent with the fact that the dimension of $\Mm(4)$ is $11$. Note that $Q^{[4]}$ is a resolution of singularity of $S^4Q$, the 4th symmetric power of $Q$, and in particular it is 8-dimensional \cite{Nakajima}. 

Assume that $Z$ is not contained in any hyperplane section. If $|Z\cap H|=3$, for a hyperplane section $H$ of $\PP_3^*$, we obtain the following exact sequence by tensoring the sequence (\ref{seq1}) with $\Oo_{C_H}$:
$$0\rightarrow  \Oo_{C_H}(p) \rightarrow \Ee|_{C_H} \rightarrow \Oo_{C_H}(-3p) \rightarrow 0,$$
where $p$ is a point on $C_H$. Since $h^0(\Ee|_{C_H})=2$, $H$ is a singular point of $S(\Ee)$. 

Similarly we can prove that $H$ is a non-singular point of $S(\Ee)$ if $|Z \cap H|=2$, and not a point of $S(\Ee)$ if $|Z \cap H|=1$. If $H$ does not contain any point of $Z$, then we have the sequence 
$$0\rightarrow \Oo_{C_H}(-2p) \rightarrow \Ee|_{C_H} \rightarrow \Oo_{C_H} \rightarrow 0$$
and the dimension of $\Ext^1(\Oo_{C_H}, \Oo_{C_H}(-2p))$ is 1. So it is not obvious whether $H$ is a point of $S(\Ee)$ or not. But if it is a point of $S(\Ee)$, it would not be a singular point. So there are exact 4 singular points of $S(\Ee)$. 

Let us fix a line $l$ in $\PP_3$.   Let us consider the $\PP_1$-family of hyperplanes of $\PP_3$ that contains $l$, say $\{ H_s\}$, $s\in \PP_1$. If $l$ passes through two points of $Z$, then there are two hyperplanes in the family $\{H_s\}$ that contain three points of $Z$. In other words, the projective line in $\PP_3^*$ corresponding to the family intersects with $S(\Ee)$ at two singular points. Since $S(\Ee)$ is a cubic hypersurface, so the projective line must be contained in $S(\Ee)$. Summarizing the argument so far, we obtain:

\begin{proposition}\label{prop}
For a general vector bundle $\Ee$ in $\Mm(4)$, there are exactly 4 singular points of $S(\Ee)$ and exactly 6 lines contained in $S(\Ee)$.
\end{proposition}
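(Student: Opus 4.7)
The plan is to consolidate the case-by-case analysis of $|Z \cap H|$ in the excerpt with a general-position argument on the zero-scheme $Z$. First I would make the genericity condition precise: since $\Mm(4)$ is birational to a $\PP_3$-bundle over $Q^{[4]}$, for general $\Ee$ the scheme $Z$ consists of four distinct reduced points in linearly general position in $\PP_3$ (no three collinear and no four coplanar). Under this hypothesis $h^0(\Ee(1,1))=1$ and $Z$ is uniquely determined by $\Ee$, so the case analysis of the excerpt applies cleanly.

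For the singular-point count, the excerpt shows that $H$ is a singular point of $S(\Ee)$ if and only if $|Z \cap H|=3$. Under general position there are precisely $\binom{4}{3}=4$ hyperplanes containing three points of $Z$, namely the four faces of the tetrahedron spanned by $Z$; so $S(\Ee)$ has exactly 4 singular points.

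For the line count, each of the $\binom{4}{2}=6$ pairs $\{z_i,z_j\} \subset Z$ determines a line $l_{ij}\subset \PP_3$; by the argument at the end of the excerpt, the dual pencil $L_{ij}\subset \PP_3^*$ meets $S(\Ee)$ with intersection multiplicity at least $2+2=4$ at the two triple-containing hyperplanes, forcing $L_{ij}\subset S(\Ee)$ since $S(\Ee)$ has degree 3. This produces 6 lines. For the converse, I would argue that any line $M \subset S(\Ee)$ corresponds via duality to a line $m\subset \PP_3$ such that every hyperplane through $m$ yields a jumping conic, and then by restricting (\ref{seq1}) to the pencil of conics $C_{H_s}$ and inspecting the splitting type as $H_s$ varies, conclude that $m$ must meet $Z$ in at least two points, hence $M=L_{ij}$ for some pair.

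The hard part will be this last step, i.e.\ ruling out lines $M$ whose dual $m$ satisfies $|Z \cap m|\in\{0,1\}$. In those cases the obstruction to jumping is governed by the one-dimensional $\Ext^1(\Oo_{C_H},\Oo_{C_H}(-2p))$ noted in the excerpt, and one has to show that this obstruction cannot vanish simultaneously for every member of a pencil of conics unless $m$ passes through two points of $Z$. This appears to require a careful analysis of the variation of $\Ext^1$ across the pencil together with the symmetric determinantal structure of $S(\Ee)$.
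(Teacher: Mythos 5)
Your count of the four singular points and your construction of the six lines agree with the paper: the paper's proof of the proposition simply summarizes the preceding case analysis on $|Z\cap H|$ and uses the same observation that a line through two singular points of a cubic meets it with multiplicity at least $4$ and hence lies on it. The genuine gap is the converse, i.e.\ the ``exactly'' in ``exactly 6 lines.'' You only outline a strategy for excluding further lines and explicitly defer the key step, namely ruling out lines $M\subset S(\Ee)$ whose dual line $m\subset\PP_3$ meets $Z$ in at most one point. The paper does not follow your proposed route (restricting the sequence (\ref{seq1}) to a pencil of conics and tracking the one-dimensional $\Ext^1$): it argues synthetically, supposing an extra line distinct from the $l_{ij}$ and deriving an incidence contradiction by projecting from a point of $Z$. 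So on this step your method is genuinely different from the paper's, but it is not carried out, and as written your proposal only proves ``at least 6 lines.''

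Moreover, the step you flag as hard is exactly where the real difficulty sits, and it is not a routine verification. For hyperplanes $H$ with $H\cap Z=\emptyset$ the paper itself concedes that membership of $H$ in $S(\Ee)$ is governed by a one-dimensional extension space and is ``not obvious''; nothing in the case analysis prevents this locus from containing entire pencils of hyperplanes. Indeed, a cubic surface with four rational double points is projectively equivalent to the Cayley $4$-nodal cubic (this is the classification from \cite{D} invoked later in the paper), and that surface contains nine lines: the six edges joining pairs of nodes together with three further lines passing through no node. Hence the desired dichotomy ``every line of $S(\Ee)$ is dual to a line through two points of $Z$'' cannot be extracted from the nodal structure alone, and your proposed argument that the $\Ext^1$ obstruction cannot vanish along a whole pencil unless $m$ meets $Z$ twice would have to confront precisely these node-free lines (a difficulty that the paper's own sketchy projection argument does not visibly resolve either). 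Until that case is controlled, the line count in your proposal remains incomplete.
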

\begin{proof}
The 6 lines connecting two of the 4 singular points are contained in $S(\Ee)$ and so it is enough to check that they are all. 
Let $Z=\{p_1, \cdots, p_4\}$ and denote the line connecting $p_i$ and $p_j$ by $l_{ij}$. 
Let us assume that there exists a line $l$ which is different from $l_{ij}$. As the first  case, let us assume that $l$ does not intersect with $l_{ij}$. If $\pi : \PP_3^* \dashrightarrow \PP_2^*$ is the projection from $p_1$, then the images of $l$ and $l_{ij}, i,j\not=1$ intersect. It implies that $l$ and $l_{ij}$ intersect for $i,j\not=2$. But this is impossible since the plane containing $p_2,p_3,p_4$ would contain $l$ and so . 

The case when $l$ intersects with $l_i$ can be shown to be impossible in a similar way.
\end{proof}

\begin{remark}
When we consider a $\PP_2$-family of hyperplanes of $\PP_3$ that contains a point of $Z$, the intersection of $\PP_2$ with $S(\Ee)$ is a cubic plane curve. And in fact, there are 3 hyperplanes in this family, that contain 3 points of $Z$. Thus the intersection of the $\PP_2$-family with $S(\Ee)$ is the union of three lines.  
\end{remark}
Conversely, let us consider a cubic hypersurface $S_3$ in $\PP_3^*$ with 4 singular points, say $H_1, \cdots, H_4\subset \PP_3$. Then $H_i$'s are 4 hyperplanes of $\PP_3$ in general position. If $S_3$ is equal to $S(\Ee)$ for some $\Ee \in \Mm(4)$ with the exact sequence (\ref{seq1}), then there are 3 points of $Z$ on each $H_i$. The intersection of $C_{H_1}$ with $H_i$, $i=2,3,4$ is two points of $Z$ and so 3 points of $Z$ are determined. The last point is just the intersection of $H_2, H_3$ and $H_4$.

\begin{theorem}
The morphism $S: \Mm(4) \rightarrow |\Oo_{\PP_3^*}(3)|$ is generically one-to-one. In other words, the set of jumping conics of $\Ee\in \Mm(4)$, uniquely determines $\Ee$ in general. 
\end{theorem}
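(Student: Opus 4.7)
\textit{Proof plan.} The plan is to reconstruct a generic $\Ee\in\Mm(4)$ from $S(\Ee)$ in two stages: first recover the length-$4$ subscheme $Z$ appearing in the extension~(\ref{seq1}), then recover the extension class in $\PP\Ext^1(\Ii_Z(1,1),\Oo_Q)\simeq\PP_3$.

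The first stage is essentially the content of the paragraph preceding the theorem. By Proposition~\ref{prop}, for general $\Ee$ the surface $S(\Ee)$ has exactly four singular points $H_1,\ldots,H_4\in\PP_3^*$. Each hyperplane $H_i$ contains three points of $Z$, so the four triple intersections $H_j\cap H_k\cap H_\ell$ (for $\{i,j,k,\ell\}=\{1,2,3,4\}$) recover the four points of $Z$. Hence $Z$ is uniquely determined by $S(\Ee)$.

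For the second stage, I use that $\Mm(4)$ is birational to a $\PP_3$-bundle over $Q^{[4]}$ via $\Ee\mapsto Z$. Restricted to the fiber over a generic $Z$, the morphism $S$ becomes a rational map
$$\Phi_Z\,:\,\PP\Ext^1(\Ii_Z(1,1),\Oo_Q)\,\dashrightarrow\,\bigl\{\text{cubics in }\PP_3^*\text{ singular at }H_1,\ldots,H_4\bigr\}.$$
Both source and target are projective spaces of dimension three: the former by construction, and the latter because imposing a node at a fixed point is four linear conditions on the $20$-dimensional space of cubics, so these $16$ conditions cut out a $4$-dimensional subspace, projectively a $\PP_3$. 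The goal is then to show that $\Phi_Z$ is birational for general $Z$; combined with the first stage, this yields generic injectivity of $S$.

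To establish birationality of $\Phi_Z$, I would parameterize extensions by $\lambda=(\lambda_1:\cdots:\lambda_4)\in\PP H^0(\Oo_Z)^*$ and compute $\Phi_Z(\lambda)$ explicitly. Normalizing so that $H_1,\ldots,H_4$ are the coordinate hyperplanes, the cubics singular at the four coordinate points are spanned by the four monomials $\prod_{j\neq i}x_j$, and one expects the coefficients of $\Phi_Z(\lambda)$ in this basis to be four independent linear combinations of the $\lambda_i$, so that $\Phi_Z$ is a projective isomorphism. The main obstacle is this explicit computation --- resolving $\Ee_\lambda(1,1)$ from the extension data and extracting the symmetric matrix whose determinant cuts out $S(\Ee_\lambda)$. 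A softer alternative, if the explicit form is unwieldy, is to check birationality by computing the differential of $\Phi_Z$ at a single well-chosen $\lambda$ and verifying it is an isomorphism, which suffices to conclude.
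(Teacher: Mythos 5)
Your first stage (recovering $Z$ from the four nodes of $S(\Ee)$ as triple intersections of the corresponding planes) is exactly the reduction the paper makes, and it is fine. The genuine gap is in your second stage: the entire content of the theorem is the injectivity of $S$ on the fiber $\PP\Ext^1(\Ii_Z(1,1),\Oo_Q)\simeq\PP_3$ over a general $Z$, and this is precisely the step you do not prove. You correctly identify the target (cubics singular at $H_1,\dots,H_4$ form a $\PP_3$ spanned by the four square-free monomials), but the claim that $\Phi_Z(\lambda)$ has coefficients that are four \emph{independent linear} forms in $\lambda$ is only stated as an expectation; you yourself flag the extraction of the symmetric matrix of $\Ee_\lambda$ as the ``main obstacle,'' and the fallback (computing the differential at one $\lambda$) is likewise not carried out. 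Neither the linearity in $\lambda$ nor the independence of the resulting forms is self-evident, so as written this is a plan for a proof rather than a proof.

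For comparison, the paper closes exactly this step by a soft cohomological argument that avoids any determinantal computation: given $\Ee\not\simeq\Ee'$ with the same $Z$, identify $\Ext^1(\Ii_Z(1,1),\Oo_Q)\simeq H^1(\Ii_Z(-1,-1))^*$ and, for a smooth conic $C_H$ disjoint from $Z$, use the exact sequence $0\to H^1(\Ii_Z)^*\to H^1(\Ii_Z(-1,-1))^*\xrightarrow{\op{res}} H^0(\Oo_{C_H})^*\to 0$. For such $H$, membership $H\in S(\Ee_\lambda)$ is exactly the condition that $\lambda$ lies in the corank-one subspace $H^1(\Ii_Z)^*$ (equivalently $\Ee_\lambda|_{C_H}\simeq\Oo_{C_H}\oplus\Oo_{C_H}(-2p)$ rather than $\Oo_{C_H}(-p)^{\oplus 2}$); choosing $H$ so that this hyperplane contains the class of $\Ee$ but not that of $\Ee'$ gives $H\in S(\Ee)\setminus S(\Ee')$. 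Note that this observation is also what would legitimize your linearity claim: for $H$ off the six lines, the cubic of $S(\Ee_\lambda)$ vanishes at $H$ if and only if a linear form depending on $H$ kills $\lambda$. What still must be checked --- and is the real content, phrased in the paper as choosing $H$ ``properly'' --- is that as $H$ varies these hyperplanes $\ker(\op{res}_H)$ separate points of $\PP\Ext^1$, i.e.\ do not all pass through two given classes. Until you either do the explicit computation you outline or supply this separation argument, the proof is incomplete.
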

\begin{proof}
It is enough to check that for two different stable vector bundles $\Ee$ and $\Ee'$ that fit into the sequence (\ref{seq1}) with the same $Z$, $S(\Ee)$ and $S(\Ee')$ are different. From the previous argument, they have the same singular points. Now, $\Ee$ and $\Ee'$ are in the extension family $\Ext^1 (\Ii_Z(1,1), \Oo_Q)$, which is isomorphic to $H^1(\Ii_Z(-1,-1))^*$. From the short exact sequence
$$0\rightarrow \Ii_Z(-1,-1) \rightarrow \Ii_Z \rightarrow \Oo_{C_H} \rightarrow 0,$$
where $C_H$ is a smooth conic that does not intersect with $Z$, we have
$$0\rightarrow H^1(\Ii_Z)^* \rightarrow H^1(\Ii_Z(-1,-1))^* \stackrel{res}{\rightarrow} H^0(\Oo_{C_H})^* \rightarrow 0.$$
Here, the map $`res'$ sends to $\Ee$ to $\Ee|_{C_H}$. Note that $H^1(\Ii_Z)^*$ is a corank 1-subspace of $H^1(\Ii_Z(-1,-1))^*$. 

If we choose $H$ properly so that the image of $H^1(\Ii_Z)^*$ contains $\Ee$, but not $\Ee'$, then their splitting will be different. To be precise, we have
$$\Ee|_{C_H}=\Oo_{C_H}(-2p)\oplus \Oo_{C_H}~~~,~~~\Ee'|_{C_H}=\Oo_{C_H}(-p)^{\oplus 2},$$
where $p$ is a point on $C_H$. In particular, $S(\Ee)$ and $S(\Ee')$ are different. 
\end{proof}

In fact, the argument in the proof of (\ref{prop}) can be applied to any symmetric determinantal cubic hypersurface with 4 singular points, we obtain the following:

\begin{corollary}
$\Mm(4)$ is biratonal to the variety of the symmetric determinantal cubic hypersurfaces with 4 singular points whose corresponding hyperplanes have 4 intersection points on $Q$.
\end{corollary}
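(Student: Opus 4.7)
The plan is to show that the morphism $S : \Mm(4) \to |\Oo_{\PP_3^*}(3)|$ of the previous theorem maps birationally onto the subvariety $\mathcal{S}$ of cubic symmetroids described in the statement. Since the preceding theorem already supplies generic injectivity of $S$, it suffices to verify (a) that $S(\Mm(4)) \subset \mathcal{S}$, and (b) that the two varieties have the same dimension.

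For (a), let $\Ee \in \Mm(4)$ be general and let $Z = \{p_1, \dots, p_4\}$ be the subscheme appearing in the extension (\ref{seq1}). By Proposition \ref{prop}, $S(\Ee)$ has exactly 4 singular points, and the case analysis preceding the Proposition shows that each singular point corresponds to a hyperplane $H_l \subset \PP_3$ containing the three points $\{p_i : i \neq l\}$. The four triple intersections $\bigcap_{i \neq l} H_i$ therefore recover the points of $Z$, which lie on $Q$ by construction, so $S(\Ee) \in \mathcal{S}$.

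For (b), recall that $\Mm(4)$ is birational to a $\PP_3$-bundle over $Q^{[4]}$ via $\Ee \mapsto Z$ with fibre $\PP\Ext^1(\Ii_Z(1,1), \Oo_Q)$, so that $\dim \Mm(4) = 8 + 3 = 11$. Dually, the construction in (a) gives a rational map $\mathcal{S} \dashrightarrow Q^{[4]}$ sending $S_3$ to the set of four triple intersections of its singular hyperplanes. The fibre over a general $Z$ consists of cubic symmetroids whose 4 prescribed singular points are the hyperplanes $H_1,\dots,H_4$ determined by $Z$; classically (Cayley's parametrization of 4-nodal cubics), this fibre is a $\PP_3$, namely the family $\sum_l \lambda_l \prod_{i \neq l} \ell_{H_i} = 0$ where $\ell_{H_i}$ is a linear form cutting out $H_i \subset \PP_3$. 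Hence $\dim \mathcal{S} = 8 + 3 = 11$, and combining with generic injectivity and the containment from (a), the map $S$ is dominant and hence birational onto $\mathcal{S}$.

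The main obstacle is the fibre-dimension computation in (b), which rests on the classical identification of general 4-nodal cubic surfaces with Cayley symmetroids. If one prefers to avoid invoking this, one can instead construct an explicit inverse rational map: given a general $S_3 \in \mathcal{S}$ with singular hyperplanes $H_1,\dots,H_4$, set $Z := \{\bigcap_{i \neq l} H_i\}_{l=1}^{4}$ and then rerun the argument of the preceding theorem on the $\PP_3$-family of extensions over $Z$ to produce a stable $\Ee \in \Mm(4)$ with $S(\Ee) = S_3$, giving the inverse directly.
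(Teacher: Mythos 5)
Your proposal is correct and follows essentially the paper's own route: both arguments rest on the classical fact that a cubic surface with four rational double points is projectively equivalent to the Cayley $4$-nodal cubic (hence a symmetroid), use it to see that the target variety has dimension $11=\dim\Mm(4)$ (the paper counts $\dim\mathrm{PGL}(4)-4$, you count $8+3$ via the fibration over $Q^{[4]}$, which is the same thing), and then conclude by the generic injectivity of $S$ from the preceding theorem, with your final remark about recovering $Z$ from the singular hyperplanes being exactly the paper's ``conversely'' paragraph. One minor duality slip, harmless for the count: since $S(\Ee)$ lives in $\PP_3^*$ with nodes at the points $[H_1],\dots,[H_4]$, the fibre should be written as $\sum_l\lambda_l\prod_{i\neq l}m_i$ where $m_i$ is the linear form on $\PP_3^*$ determined by the point $p_i=\bigcap_{j\neq i}H_j\in\PP_3$ (its zero locus is the plane spanned by the nodes $[H_j]$, $j\neq i$), rather than with forms cutting out the $H_i$ inside $\PP_3$.
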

\begin{proof}
It is known in \cite{D} that cubic surfaces with 4 rational double points are projectively isomorphic to the {\it Cayley 4-nodal cubic surface}, which is a cubic surface with 4 nodal points defined by 
$$t_0t_1t_2+t_0t_1t_3+t_0t_2t_3+t_1t_2t_3=\det \left(
               \begin{array}{ccc}
                 t_0 & 0 & t_2\\
                 0 & t_1 & -t_2\\
                 -t_3 & t_3 & t_2+t_3
               \end{array} 
             \right),
            $$
which has 4 nodal points $[1,0,0,0], [0,1,0,0], [0,0,1,0], [0,0,0,1]$. 
It means that we have a 3-dimensional family of cubic symmetroids for each fixed 4 points as singularities. Here $3=\dim \mathrm{PGL(4)}- \dim (\PP_3^{[4]})$.  So the assertion follows automatically from the previous theorem because the dimension of the variety of the cubic symmetroids in the assertion is $11=\dim (\mathrm{PGL}(4))-4$, which is the dimension of $\Mm(4)$.
\end{proof}

\begin{theorem}
$\Mm(4)$ is rational.
\end{theorem}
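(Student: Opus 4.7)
The plan is to combine the birational description of $\Mm(4)$ already established in the excerpt with a direct rationality check for the base of that description. Recall that for a general $\Ee\in\Mm(4)$, the sequence (\ref{seq1}) determines a length-4 subscheme $Z\subset Q$ uniquely (since $h^0(\Ee(1,1))=1$ for $Z$ in general position), while conversely a general extension class in $\Ext^1(\Ii_Z(1,1),\Oo_Q)\simeq H^0(\Oo_Z)^*$ yields a stable bundle by \cite{C}. This gives a birational map from the total space of a $\PP_3$-bundle over (an open subset of) the Hilbert scheme $Q^{[4]}$ onto $\Mm(4)$, consistent with the dimension count $8+3=11$.

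First I would upgrade ``birational to a projective bundle'' to ``birational to a product''. Over the open locus $U\subset Q^{[4]}$ where $Z$ is reduced and in general position, $\Ext^1(\Ii_Z(1,1),\Oo_Q)\simeq H^0(\Oo_Z)^*$ has constant rank $4$ and is the fibre of a genuine rank-$4$ vector bundle $\Vv$ on $U$. Hence $\PP(\Vv)$ is Zariski-locally trivial, and in particular birational to $U\times\PP_3$, so $\Mm(4)$ is birational to $Q^{[4]}\times\PP_3$.

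Next I would prove $Q^{[4]}$ is rational. Since $Q$ is a smooth surface, the Hilbert--Chow morphism realises $Q^{[4]}$ as birational to the symmetric product $Q^{(4)}$, so it suffices to show $Q^{(4)}$ is rational. Using $Q=\PP(V_1)\times\PP(V_2)\cong\PP_1\times\PP_1$ and the first projection $f:Q\to\PP_1$, I would consider the induced dominant rational map
\[
Q^{(4)} \lra (\PP_1)^{(4)} = \PP_4.
\]
Over a generic point of $\PP_4$, i.e.\ an unordered tuple of four distinct points $\{p_1,\dots,p_4\}\subset\PP_1$, the fibre is canonically $(\PP_1)^4$: since the $p_i$ are distinguishable, choosing the second coordinate at each $p_i$ involves no symmetrisation. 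Hence $Q^{(4)}$ is birational to $\PP_4\times(\PP_1)^4$, which is rational.

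Combining the two steps, $\Mm(4)$ is birational to $\PP_3\times\PP_4\times(\PP_1)^4$, hence rational. The main point that needs care is the first step: one must really check that the extension parametrisation is generically injective, not merely dominant. This however is exactly the content of the uniqueness of $Z$ coming from $h^0(\Ee(1,1))=1$, which is recorded in the discussion immediately following (\ref{seq1}). The symmetric-product computation is then essentially a formality, and no further subtlety beyond the Brauer-triviality of the projective bundle (automatic over $\CC$ once it comes from a vector bundle on $U$) is involved.
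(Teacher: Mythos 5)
Your argument is correct in substance but follows a genuinely different route from the paper. The paper does not argue from the extension description directly: it first identifies $\Mm(4)$ birationally (via the map $S$, i.e.\ the preceding Corollary) with the variety $Y$ of cubic symmetroids in $\PP_3^*$ having four singular points whose associated planes meet in four points lying on $Q$, and then proves $Y$ rational by viewing the variety $X$ of all $4$-nodal cubic symmetroids as a $\mathrm{PGL}(4)$-homogeneous, generically $\PP_3$-bundle over $\PP_3^{[4]}$ and restricting that fibration over $Q^{[4]}$, whose rationality is simply asserted. You bypass the symmetroid picture entirely and use the Serre-construction statement recorded after (\ref{seq1}): $\Mm(4)$ is birational to $\PP(\Vv)$ for a rank-$4$ bundle $\Vv$ on the general-position locus of $Q^{[4]}$ with fibres $\Ext^1(\Ii_Z(1,1),\Oo_Q)\simeq H^0(\Oo_Z)^*$, generic injectivity coming from $h^0(\Ee(1,1))=1$. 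This is shorter, it makes explicit a point both fibration arguments need but the paper leaves implicit (the $\PP_3$-fibrations are projectivizations of vector bundles, hence Zariski-locally trivial, so there is no Brauer obstruction), and it supplies a proof of the rationality of $Q^{[4]}$ instead of quoting it.

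One step of yours needs tightening. The fibre of $Q^{(4)}\dashrightarrow(\PP_1)^{(4)}=\PP_4$ over a general closed point is $(\PP_1)^4$, but not \emph{canonically}: the four points $p_i$ admit no global ordering, the monodromy is the full symmetric group, and the generic fibre is the Weil restriction $\mathrm{Res}_{L/K}(\PP^1_L)$ for the degree-$4$ extension $L/K$, $K=\CC(\PP_4)$, cut out by the universal binary quartic. So birational triviality of this fibration does not follow just from knowing the closed fibres. It is nevertheless true: $\mathrm{Res}_{L/K}(\mathbb{A}^1_L)\cong\mathbb{A}^4_K$ is a nonempty open subset of the irreducible generic fibre, so the generic fibre is $K$-rational; alternatively, apply the no-name lemma to the $S_4$-action on $\CC(x_1,\dots,x_4)(y_1,\dots,y_4)$, where the $y_i$ span a permutation representation over $\CC(x_1,\dots,x_4)$. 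Either way $Q^{(4)}$ is birational to $\PP_4\times\mathbb{A}^4$, hence rational, and with this repair your proof is complete.
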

\begin{proof}
Let us prove that the variety $Y$ of the cubic symmetroids with 4 singular points whose corresponding hyperplanes have 4 intersection points on $Q$, is rational. First of all, the variety $X$ of cubic symmetroids with 4 singular points, generically has a $\PP_3$-bundle structure over $\PP_3^{[4]}$ and it is transitively acted by $\mathrm{PGL}(4)$. Thus $X$ is rational and we have a dominant map $\pi  : X \dashrightarrow \PP_3^{[4]}$ to a rational variety $\PP_3^{[4]}$. Since $Y$ is a subvariety of $X$ that is generically a $\PP_3$-bundle over $Q^{[4]}$ from $\pi$ and $Q^{[4]}$ is rational, so $Y$ is a rational variety.
\end{proof}

Now let us consider a special case when $Z$ is coplanar. In this case, $S(\Ee)$ is a cubic surface with a unique singular point corresponding to the hyperplane containing $Z$, say $H$. Note that  $h^0(\Ee(1,1))=2$. Then there are 1-dimensional family of zero-dimensional subscheme $Z$ for which $\Ee$ fits into the sequence (\ref{seq1}). Such $Z$ should be contained in $C_H$. For each $Z$, we can consider $\PP_1$-family of hyperplanes that contain two points of $Z$ and this corresponds to a line contained in $S(\Ee)$. So we can find 6 lines contained in $S(\Ee)$ out of one such $Z$. As we vary $Z$ in the 1-dimensional family, we have infinitely many lines through $H$ contained in $S(\Ee)$. Thus we obtain the following statement:

\begin{proposition}
For the vector bundle $\Ee$ fitted into the sequence (\ref{seq1}) with coplanar $Z$, $S(\Ee)$ is a cone over a cubic curve in $\PP_2^*$ with the vertex point corresponding the hyperplane containing $Z$. 
\end{proposition}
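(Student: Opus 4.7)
My plan is to promote the $\PP_1$-family of length-$4$ subschemes furnished by $h^0(\Ee(1,1))=2$ to a $1$-parameter family of lines of $S(\Ee)$ through $H$, and then to conclude that $S(\Ee)$ is a cone with vertex $H$. First, to pin every member of the pencil to $C_H$, pick a basis $s_0,s_1$ of $H^0(\Ee(1,1))$; the wedge $s_0\wedge s_1$ is a section of $\det\Ee(1,1)=\Oo_Q(1,1)$ whose zero locus is a $(1,1)$-conic $C\subset Q$. Since $s_0,s_1$ are linearly independent off $C$, the zero scheme $Z_s$ of every section $s=as_0+bs_1$ is contained in $C$; as the original $Z=Z_{s_0}\subset H$ has length $4$ while $C\cap H$ has length $2$ unless $C\subset H$, one forces $C=C_H$, so every $Z_s\subset C_H$, and distinct pencil classes give distinct $Z_s$.

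Next, write $Z_s=\{p_1(s),\ldots,p_4(s)\}$. For each pair $(i,j)$ the pencil $\ell_{ij}^s$ of hyperplanes containing the chord $\overline{p_i(s)p_j(s)}\subset H$ is a projective line in $\PP_3^*$ through $H$, and since every hyperplane in $\ell_{ij}^s$ meets $Z_s$ in at least two points we have $\ell_{ij}^s\subset S(\Ee)$ by the line-containment argument used in the proof of Proposition \ref{prop}. I claim the family $\{\ell_{ij}^s\}_s$ genuinely varies in $s$: otherwise each $p_i(s)$ would be pinned as the common intersection of its three concurrent chords, forcing $\{Z_s\}$ constant and contradicting the nontriviality of the pencil. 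Hence $S(\Ee)$ contains a $1$-parameter family of lines through $H$.

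Finally, since $H$ is the unique singular point of $S(\Ee)$ (asserted in the paragraph preceding the statement), the cubic $S(\Ee)$ is irreducible, because a reducible cubic surface would be singular along the curve where its components meet. The union $\Sigma$ of the $1$-parameter family of lines of $S(\Ee)$ through $H$ produced above is a $2$-dimensional subvariety of $S(\Ee)$, hence $\Sigma=S(\Ee)$ by irreducibility. By construction $\Sigma$ is a cone with vertex $H$ over a curve $C\subset\PP_2^*$; matching degrees gives $\deg C=3$, so $C$ is a cubic. The main technical obstacle I anticipate is the confinement in the first paragraph: without the wedge-product identification forcing every $Z_s$ to lie on $C_H$, the pencil could scatter across $Q$ and the subsequent counting of lines through $H$ would break down.
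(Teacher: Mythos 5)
Your argument is correct and follows essentially the same route as the paper: the pencil coming from $h^0(\Ee(1,1))=2$ produces a one-parameter family of length-$4$ schemes $Z_s\subset C_H$, each chord of $Z_s$ gives a line of $S(\Ee)$ through the point $H\in\PP_3^*$, and the resulting infinite family of lines through $H$ forces $S(\Ee)$ to be a cone over a cubic curve with vertex $H$. Your wedge-product confinement of the $Z_s$ to $C_H$, the non-constancy of the family of lines, and the irreducibility step are welcome elaborations of points the paper merely asserts.
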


\providecommand{\bysame}{\leavevmode\hbox to3em{\hrulefill}\thinspace}
\providecommand{\MR}{\relax\ifhmode\unskip\space\fi MR }
\providecommand{\MRhref}[2]{%
  \href{http://www.ams.org/mathscinet-getitem?mr=#1}{#2}
}
\providecommand{\href}[2]{#2}

\end{document}